\newcommand{\url}[1]{{\tt #1}}
\def\k{\kappa}
\newtheorem{theorem}{Theorem}[section]
\newtheorem{lemma}[theorem]{Lemma}
\newtheorem{open Question}[theorem]{Open Question}
\newtheorem{remark}[theorem]{Remark}
\numberwithin{equation}{section}
\def\k{\kappa}
\def\rmark{\mbox{$\rm\bf\rule{0.06em}{1.45ex}\kern-0.05em R$}}
\def\pmark{\mbox{$\rm\bf\rule{0.06em}{1.45ex}\kern-0.05em P$}}
\def\nmark{\mbox{$\rm\bf\rule{0.06em}{1.45ex}\kern-0.05em N$}}
\def\vdash{\mbox{$\rm\| \kern-0.13em -$}}
\def\rmark{\mbox{$\rm\bf\rule{0.06em}{1.45ex}\kern-0.05em R$}}
\def\pmark{\mbox{$\rm\bf\rule{0.06em}{1.45ex}\kern-0.05em P$}}
\def\nmark{\mbox{$\rm\bf\rule{0.06em}{1.45ex}\kern-0.05em N$}}
\def\vdash{\mbox{$\rm\| \kern-0.13em -$}}
\begin{document}

\title[Singular cofinality conjecture and a question of Gorelic]{Singular cofinality conjecture and a question of Gorelic }

\author[M. Golshani.]{Mohammad Golshani}

\thanks{The author's research was in part supported by a grant from IPM (No. 91030417).}

\maketitle

\begin{abstract}
We give an affirmative answer to a question of Gorelic \cite{Gorelic}, by showing it is consistent, relative to the existence of large cardinals, that there is a proper class of cardinals $\alpha$ with $cf(\alpha)=\omega_1$ and $\alpha^\omega > \alpha.$
\end{abstract}

\section{introduction}
Around 1980, Pouzet \cite{pouzet} proved the fundamental result that if $(\mathbb{P}, \leq)$ is a poset of singular cofinality, then it contains an infinite antichain. This lead to the formulation of a very natural conjecture, first appearing
implicitly in \cite{pouzet}, and then explicitly in a paper by Milner and Sauer \cite{milner-sauer}:
\\
{\bf Conjecture.} Suppose that $(\mathbb{P}, \leq)$ is a poset of singular cofinality $\lambda.$ Then $(\mathbb{P}, \leq)$ has an antichain of size $cf(\lambda).$

This is called the \emph{Singular Cofinality Conjecture}.

Set  $C=\{\alpha: \alpha$ is a cardinal, $cf(\alpha)=\omega_1, \alpha^\omega >\alpha \}.$ In \cite{Gorelic}, Gorelic observed that if $C$ is not a proper class, then the Singular Cofinality Conjecture holds "ultimately"
(in ZFC) in the case of cofinality $\omega_1$, and he
asked if it is consistent that $C$ is a proper class.  In this paper we give an affirmative answer to this question, assuming the existence of large cardinals:

\begin{theorem}
Assuming the existence of suitable large cardinals, it is consistent that
$C=\{\alpha: \alpha$ is a cardinal, $cf(\alpha)=\omega_1, \alpha^\omega >\alpha \}$ is a proper class.
\end{theorem}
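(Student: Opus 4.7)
The plan is to construct a model in which, at a proper class of cardinals $\kappa$, we simultaneously have $\cf(\kappa)=\omega$ and $\kappa^{\omega}\geq \kappa^{+\omega_{1}+1}$. Once this is arranged, the cardinal $\alpha:=\kappa^{+\omega_{1}}$ is the $\omega_{1}$-th successor of $\kappa$, hence a limit cardinal of cofinality $\cf(\omega_{1})=\omega_{1}$; and since $\kappa<\alpha$ and $\kappa^{\omega}\geq\kappa^{+\omega_{1}+1}=\alpha^{+}$, we obtain $\alpha^{\omega}\geq \kappa^{\omega}\geq \alpha^{+}>\alpha$. Letting $\kappa$ range over a proper class of large cardinals then yields a proper class of witnesses in $C$, as required.

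The local ingredient at each designated $\kappa$ will be an extender-based Prikry forcing of Gitik--Magidor type. If $\kappa$ carries a $(\kappa,\kappa^{+\omega_{1}+1})$-extender $E$ in the relevant intermediate model, the corresponding forcing adds a cofinal $\omega$-sequence to $\kappa$ (so $\cf(\kappa)=\omega$ in the extension), preserves all cardinals, and adds $\kappa^{+\omega_{1}+1}$-many new $\omega$-sequences through $\kappa$ via the generators of $E$, yielding $\kappa^{\omega}\geq \kappa^{+\omega_{1}+1}$. The existence of such an extender requires $\kappa$ to be $\kappa^{+\omega_{1}+1}$-strong in the intermediate model; preserving this degree of strength under the preceding part of the iteration is the principal demand we place on the ground model and on the preparation.

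Globally we perform a reverse Easton iteration $\langle \MPB_{\alpha}, \dot{\MQB}_{\alpha}:\alpha\in\Ord\rangle$ with Easton supports, in which $\dot{\MQB}_{\alpha}$ is trivial unless $\alpha$ belongs to a proper class $S$ of designated large cardinals, in which case $\dot{\MQB}_{\alpha}$ is a name for the extender-based Prikry forcing at $\alpha$. The ground model is assumed to contain a proper class of supercompact cardinals (more than enough to provide the required $(\kappa,\kappa^{+\omega_{1}+1})$-extenders), suitably prepared by a Laver-style preliminary forcing so that $S$ can be taken to consist of cardinals whose supercompactness — and in particular whose $\kappa^{+\omega_{1}+1}$-strongness — is preserved by the subsequent iteration. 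Easton supports ensure that $\MPB_{\kappa}$ is a set forcing of size $\kappa$, which combined with the $\kappa$-c.c.\ of the bounded part of the iteration implies that no unintended cardinals are collapsed and that the tail past $\kappa$ does not disturb $\kappa^{\omega}$ as computed in $V[G_{\kappa+1}]$.

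The main obstacle is precisely the preservation of large-cardinal strength through the class iteration. Because the stages $\dot{\MQB}_{\alpha}$ at $\alpha\in S\cap\kappa$ are Prikry-type and hence not directed closed, Laver indestructibility does not apply in its most naive form, so one must lift the ambient supercompactness embedding $j:V\to M$ to $V[G_{\kappa}]$ more carefully, using the Prikry property of the earlier nontrivial stages, the closure of $M$, and master conditions built from the Laver preparation. I expect this lifting argument to be the crux of the proof; once it is in place at each $\kappa\in S$, the verification that every $\alpha=\kappa^{+\omega_{1}}$ lies in $C$ in the final model reduces to standard preservation lemmas for Easton-supported iterations of extender-based Prikry forcings.
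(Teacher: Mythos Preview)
Your outline is essentially the paper's third proof: iterate extender-based Prikry forcing of Gitik--Magidor type at a proper class $S$ of large cardinals, and observe that $\alpha=\kappa^{+\omega_{1}}$ lands in $C$ for each $\kappa\in S$. The verification that $\alpha\in C$ is exactly as in the paper.

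Where you diverge is in the bookkeeping, and here you are making the argument harder than it needs to be. You invoke a proper class of supercompacts, a Laver preparation, and a delicate lifting argument to keep each $\kappa\in S$ sufficiently strong after the iteration below $\kappa$; you flag this as the crux. The paper avoids this entirely by two simple moves. First, it thins the class so that no element is a limit point of itself; then for each $\kappa\in S$ the set $S\cap\kappa$ is bounded in $\kappa$, so the part of the construction below $\kappa$ is small forcing relative to $\kappa$ and cannot disturb the $(\kappa,\kappa^{+\omega_{1}+1})$-extender. Second, the paper does not iterate at all in the usual sense: it takes a full-support \emph{product} $\mathbb{P}$ whose conditions are functions $p=(X^{p},F^{p})$ with $X^{p}\subseteq S$ and $F^{p}(\kappa)\in\mathbb{P}_{E(\kappa)}$, where each $E(\kappa)$ is fixed once and for all in the ground model. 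No stage depends on earlier stages, so there is nothing to preserve. This lets the paper work from the optimal hypothesis of a proper class of $\kappa^{+\omega_{1}+1}$-strong cardinals rather than supercompacts, and the ``crux'' you anticipate simply does not arise. Your plan would go through, but the Laver preparation and the lifting of a supercompactness embedding past Prikry-type stages are unnecessary overhead.
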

\begin{remark}
We give three different proofs for the above theorem. The first proof uses a strong cardinal ( in fact a $\kappa^{+\omega_1+2}-$strong cardinal $\kappa$) and is based on extender based Radin forcing. The second proof  assumes the existence of a proper class of $\kappa^{+\omega_1+1}-$strong cardinals $\kappa$, and is based on iterated Prikry forcing. The third proof also assumes the existence of a proper class of $\kappa^{+\omega_1+1}-$strong cardinals $\kappa$, and is based on iteration of extender based Prikry forcing. We also show that the large cardinal assumption in our second and third proofs is almost optimal.
\end{remark}

\section{proof of the main theorem}
\subsection{First proof}
In this subsection we give our  first proof of the main Theorem 1.1., assuming the existence of a strong cardinal. Thus suppose that $GCH$ holds and let $\kappa$ be a strong cardinal. Let $j: V \rightarrow M$ be an elementary embedding of the universe into some inner model $M$ with $crit(j)=\kappa$ and $M \supseteq V_{\kappa^{+\omega_1+2}}.$ Using $j$ construct, as in \cite{merimovich}, an extender sequence system $\bar{E}$ of length $\kappa^+$ and of size $\kappa^{+\omega_1+1},$ and let $\mathbb{P}_{\bar{E}}$ be the corresponding extender based Radin frocing as is defined in \cite{merimovich}. Also let $G$ be $\mathbb{P}_{\bar{E}}-$generic over $V$. Then:

\begin{theorem}( \cite{merimovich})
$(a)$ $V$ and $V[G]$ have the same cardinals,

$(b)$ $\kappa$ remains an inaccessible cardinal in $V[G],$

$(c)$ In $V[G],$ there exists a club $\bar{C}$ of $\kappa,$ such that if $\gamma$ is a limit point of $\bar{C},$ then $2^\gamma = \gamma^{+\omega_1+1}$
\end{theorem}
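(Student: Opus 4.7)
The plan is to follow the standard template for analyzing extender based Radin forcing as in Merimovich's treatment. First I would record the structural properties of $\PE$ that drive the argument: (i) a factorization at each point of the generic Radin club, so that if $\gga$ is a generic point then $\PE \cong \PE \restricted \gga \ast \PE / \gga$ with the quotient sufficiently closed, (ii) the Prikry property, asserting that any statement in the forcing language is decided by a direct extension, and (iii) the $\gk^+$-chain condition, which follows from the extender sequence having length $\gk^+$ together with a standard $\Delta$-system argument on the finite ``lower parts'' of conditions.

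For part (a), cardinal preservation above $\gk$ follows immediately from the $\gk^+$-c.c. For cardinals $\leq \gk$, the combination of the factorization and the Prikry property reduces the question to: the bottom part $\PE \restricted \gga$ is small (of size less than the next generic cardinal) and the top part is highly closed, so no cardinal in the interval between two successive generic points gets collapsed; and $\gk$ itself is not collapsed because the Prikry property together with the closure of direct extensions prevents the addition of short cofinal sequences. Part (b) then follows: the generic Radin sequence produces a club of former large cardinals below $\gk$, witnessing that $\gk$ remains a limit cardinal, while its regularity is guaranteed by the $\gk^+$-c.c.\ plus the Prikry property (any putative cofinal map $f \func \ga \to \gk$ with $\ga < \gk$ would, after a direct extension decides it, already lie in the ground model).

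The heart of the argument is (c). The generic $G$ produces a club $\bar{C} \subseteq \gk$ of length matching the length of $\Es$, and at each limit point $\gga$ of $\bar C$ the full extender-based machinery is activated: the local quotient forcing at $\gga$ is an extender based Radin forcing built from $\Es \restricted \gga$, whose extender sequence has size $\gga^{+\gw_1+1}$ by elementarity (the system $\Es$ was chosen so that its local reflections at measure-one-many $\gga < \gk$ produce extenders of length $\gga^{+\gw_1+1}$). The lower bound $2^\gga \geq \gga^{+\gw_1+1}$ comes from exhibiting $\gga^{+\gw_1+1}$ pairwise distinct Cohen-like generic subsets of $\gga$ produced by the individual extenders on the sequence at $\gga$; this is the usual extender-based Prikry trick, adapted to the Radin context.

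The main obstacle, and the step I would spend the most care on, is the matching upper bound $2^\gga \leq \gga^{+\gw_1+1}$ at limit points of $\bar C$. One counts $\PE$-names for subsets of $\gga$ by using the factorization to split a name into a part living in $\PE \restricted \gga'$ for a slightly larger generic point $\gga'$ just above $\gga$ and a part living in the closed quotient; the closed quotient adds no new subsets of $\gga$ by closure, and the bottom part has at most $\gga^{+\gw_1+1}$-many nice names by $\gk^+$-c.c.\ together with the size of the extender system restricted to $\gga'$. Carefully aligning the size estimates so that ``limit points of $\bar C$'' is exactly the right set where the lower and upper bounds meet at $\gga^{+\gw_1+1}$ (rather than being overshot by the extender sequence at successor points of $\bar C$) is the delicate accounting that Merimovich's setup is designed to handle.
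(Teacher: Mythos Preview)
The paper does not give a proof of this theorem at all: it is stated with the citation ``(\cite{merimovich})'' and then used as a black box, so there is no argument in the paper to compare your proposal against. Your sketch is broadly the shape of Merimovich's actual analysis (factorization at generic points, Prikry property, chain condition, counting names), so in that sense it is consistent with the cited source rather than with anything in this paper.

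That said, two points in your sketch are imprecise and would not survive as written. First, your argument for the regularity of $\gk$ in (b) is off: the $\gk^+$-c.c.\ is irrelevant for showing $\gk$ stays regular, and the bare combination ``Prikry property plus direct-extension closure'' is not enough either, since in Radin-type forcings with short measure sequences $\gk$ is singularized despite both of those holding. What actually makes $\gk$ remain regular (indeed inaccessible) is that the extender sequence system has length $\gk^+$, which lies past a repeat point; this is a structural input, not a generic chain-condition/closure argument. Second, in your upper-bound count for (c) you invoke the global $\gk^+$-c.c.\ when counting nice names for subsets of $\gga$; what is really needed is the local chain condition of the factor $\PE \restricted \gga'$, and one must check that this factor has size and chain condition bounded by $\gga^{+\gw_1+1}$, which again comes from the specific choice of the extender system rather than from the global $\gk^+$-c.c.
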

 By $(b)$, $V_\kappa$ of $V[G]$ is a model of $ZFC$, and the following lemma shows that in it, $C$ is a proper class, which completes the proof of Theorem 1.1.
\begin{lemma}
In $V[G], \{\alpha <\kappa: \alpha$ is a cardinal, $cf(\alpha)=\omega_1, \alpha^\omega >\alpha \} \supseteq \{\gamma^{+\omega_1}: \gamma$ is a limit point of $\bar{C}, cf(\gamma)=\omega \}.$
\end{lemma}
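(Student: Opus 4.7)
The plan is, given $\gamma$ a limit point of $\bar{C}$ with $\cf(\gamma)=\omega$ in $V[G]$, to set $\alpha=\gamma^{+\omega_1}$ and verify in $V[G]$ the three clauses defining membership in $C$: that $\alpha$ is a cardinal, that $\cf(\alpha)=\omega_1$, and that $\alpha^{\omega}>\alpha$. That $\alpha$ is a cardinal of $V[G]$ is immediate from Theorem 2.1(a); the same clause also guarantees that the symbol $\gamma^{+\omega_1}$ denotes the same ordinal in $V$ and in $V[G]$, so one can move freely between the two models when computing $\alpha$ and its successors below $\kappa$.

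For the cofinality, I would first observe that the strictly increasing sequence $\langle \gamma^{+\beta} : \beta<\omega_1\rangle$ has supremum $\alpha$ in both $V$ and $V[G]$ (by cardinal preservation), which immediately gives $\cf^{V[G]}(\alpha)\le\omega_1$. For the lower bound, assume toward contradiction that $\cf^{V[G]}(\alpha)=\omega$, fix a cofinal $\omega$-sequence in $V[G]$, and for each term choose an index $\beta_n<\omega_1$ such that the term lies below $\gamma^{+\beta_n}$; then $\sup_n \beta_n=\omega_1$, which yields $\cf^{V[G]}(\omega_1)=\omega$. But $\omega_1^V=\omega_1^{V[G]}$ (again by Theorem 2.1(a)), so $\omega_1$ is regular in $V[G]$, a contradiction.

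Finally, for the inequality $\alpha^\omega>\alpha$, the computation is direct. Since $\cf^{V[G]}(\gamma)=\omega$, one has $(\gamma^\omega)^{V[G]}\ge (2^\gamma)^{V[G]}$, and Theorem 2.1(c) identifies the right-hand side with $\gamma^{+\omega_1+1}=\alpha^+$; monotonicity in the base then gives $(\alpha^\omega)^{V[G]}\ge (\gamma^\omega)^{V[G]}\ge\alpha^+>\alpha$, as required. The entire argument is just a careful repackaging of the three outputs of Theorem 2.1, so there is no serious obstacle; the only mildly delicate step is the lower-bound cofinality calculation, which the $\omega_1$-preservation trick above disposes of cleanly.
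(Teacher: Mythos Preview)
Your argument is correct and follows essentially the same route as the paper, which simply records that $\cf(\gamma^{+\omega_1})=\omega_1$ and then computes $(\gamma^{+\omega_1})^\omega \ge \gamma^\omega = 2^\gamma = \gamma^{+\omega_1+1} > \gamma^{+\omega_1}$. You have merely spelled out in more detail the cardinal-preservation bookkeeping and the cofinality verification that the paper dismisses with ``clearly''.
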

\begin{proof}
Suppose $\gamma$ is a limit point of $\bar{C}$ of cofinality $\omega.$ Then clearly $cf(\gamma^{+\omega_1})=\omega_1.$ We also have $(\gamma^{+\omega_1})^\omega \geq \gamma^\omega=2^\gamma=  \gamma^{+\omega_1+1} >\gamma^{+\omega_1}.$
\end{proof}
\subsection{Second proof}
We now give our second proof of the main Theorem 1.1., assuming the existence of a proper class of $\kappa^{+\omega_1+1}-$strong cardinals $\kappa$. Thus assume $GCH$ holds and suppose that there exists a proper class $A$ of $\kappa^{+\omega_1+1}-$strong cardinals $\kappa$. We may assume that no element of $A$ is a limit point of $A$.

{\bf Step 1)} Let $\mathbb{P}$ be the reverse Easton iteration of $Sacks(\alpha, \alpha^{+\omega_1+1})$ for each inaccessible cardinal $\alpha$, and let $G$ be $\mathbb{P}-$generic over $V$.  Then:

\begin{theorem} ( \cite{friedman-honzik})
$(a)$ $V$ and $V[G]$ have the same cardinals and cofinalities,

$(b)$ $V[G] \models ``$for each inaccessible cardinal $\alpha, 2^{\alpha}=\alpha^{+\omega_1+1}$'',

$(c)$ Each $\alpha\in A$ is measurable in $V[G].$
\end{theorem}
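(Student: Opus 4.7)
The plan is to use standard techniques from the theory of reverse Easton iterations, combined with the Friedman--Honzik approach to preserving large cardinals under generalized Sacks forcing. First I would record the basic properties of $Sacks(\alpha,\alpha^{+\omega_1+1})$ at an inaccessible $\alpha$ under GCH: it is $\alpha$-closed, satisfies the $\alpha^{++}$-chain condition, admits a fusion argument preserving $\alpha^+$, and forces $2^\alpha = \alpha^{+\omega_1+1}$.

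For parts (a) and (b), the standard reverse Easton factorization applies. For any inaccessible $\alpha$ I would write $\mathbb{P} \cong \mathbb{P}_\alpha * \dot{Sacks}(\alpha,\alpha^{+\omega_1+1}) * \dot{\mathbb{P}}_{\text{tail}}$, where $\mathbb{P}_{\text{tail}}$ is sufficiently closed that it adds no new subsets of $\alpha^{+\omega_1+1}$, while $\mathbb{P}_\alpha$ has size $\alpha$ with chain condition that combines with the fusion property of the stage-$\alpha$ factor to preserve $\alpha^+$ and all cardinals and cofinalities in a neighborhood of $\alpha$. The computation of $2^\alpha$ in $V[G]$ then follows stage by stage: cardinals below $\alpha^{+\omega_1+1}$ are preserved, the stage-$\alpha$ forcing produces the required subsets of $\alpha$, and the tail does not add more, yielding (b).

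For (c), fix $\kappa \in A$ and an elementary embedding $j\colon V \to M$ witnessing $\kappa^{+\omega_1+1}$-strength, so $\crit(j) = \kappa$ and $M \supseteq V_{\kappa^{+\omega_1+1}}$. Factor $j(\mathbb{P})$ in $M$ as $\mathbb{P}_\kappa * \dot{Sacks}(\kappa,\kappa^{+\omega_1+1}) * \dot{\mathbb{Q}}_{\text{tail}}$, where the first two factors coincide with the corresponding initial segment of $\mathbb{P}$ in $V$ (since $M$ agrees with $V$ up to $\kappa^{+\omega_1+1}$), and $\mathbb{Q}_{\text{tail}}$ is the iteration in $M$ strictly above $\kappa^{+\omega_1+1}$. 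Write $G = G_\kappa * g_\kappa * G_{\text{tail}}$. The embedding $j$ lifts trivially through $\mathbb{P}_\kappa$ to $j\colon V[G_\kappa] \to M[G_\kappa]$ because $\mathbb{P}_\kappa$ lies below $\crit(j)$. To lift through $Sacks(\kappa,\kappa^{+\omega_1+1})$, one constructs, possibly after passing to a further generic extension, a master condition for $j(Sacks(\kappa,\kappa^{+\omega_1+1}))$ compatible with $j[g_\kappa]$, using a careful analysis of the generic Sacks trees to obtain $g^*$ with $j[g_\kappa] \subseteq g^*$. Next one builds an $M[G_\kappa * g^*]$-generic filter $H_{\text{tail}}$ for $\mathbb{Q}_{\text{tail}}$, using the high closure of $\mathbb{Q}_{\text{tail}}$ in $M$ together with an enumeration of its dense sets. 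From the lifted $j\colon V[G] \to M[G_\kappa * g^* * H_{\text{tail}}]$ one extracts the ultrafilter $U = \{X \subseteq \kappa : \kappa \in j(X)\}$, and verifies that $U \in V[G]$, so that $\kappa$ is measurable in $V[G]$.

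The main obstacle is the master condition step at $\kappa$. Since $Sacks(\kappa,\kappa^{+\omega_1+1})$ is only $\kappa$-closed, the naive pointwise image $\bigcap j[g_\kappa]$ does not in general yield a condition of $j(Sacks(\kappa,\kappa^{+\omega_1+1}))$. The resolution is a delicate tree manipulation that produces an exact master condition lying inside the branch structure of the Sacks trees; this is where the Friedman--Honzik technology does the real work. A secondary care point is ensuring that $U$ actually belongs to $V[G]$: any auxiliary forcing used to build $g^*$ or $H_{\text{tail}}$ must add no new subsets of $\kappa$ over $V[G]$, so that the definition of $U$ can be pulled back to $V[G]$ by the usual absoluteness argument.
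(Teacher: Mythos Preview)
The paper does not give its own proof of this theorem: it is simply quoted from \cite{friedman-honzik} and used as a black box, so there is nothing in the paper to compare your argument against. Your sketch is a faithful outline of the Friedman--Honzik method (reverse Easton factorization for (a) and (b), lifting of a $\kappa^{+\omega_1+1}$-strength embedding through the stage-$\kappa$ Sacks product via a surgically constructed master condition for (c)), and you correctly flag the master-condition step as the point where the real work lies.
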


{\bf Step 2)} Working in $V[G],$ let $\mathbb{Q}$ be the forcing defined in [1, $\S$3.1], for changing the cofinality of each $\alpha\in A$ to $\omega,$ and let $H$ be $\mathbb{Q}-$generic over $V[G].$

\begin{theorem} ( \cite{friedman-golshani})
$(a)$ $V[G]$ and $V[G][H]$ have the same cardinals,

$(b)$ For each $\alpha\in A, V[G][H]\models ``\alpha$ is a strong limit cardinal, $cf(\alpha)=\omega$ and $2^{\alpha}=\alpha^{+\omega_1+1}$''.
\end{theorem}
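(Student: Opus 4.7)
The plan is to analyze $\mathbb{Q}$ as an Easton-support class iteration whose stage at each $\alpha\in A$ is a Prikry forcing $\mathbb{Q}_\alpha$ shooting an $\omega$-cofinal sequence through $\alpha$. The measurability of $\alpha$ in $V[G]$ provided by Theorem 2.3$(c)$ supplies the normal measure needed to define $\mathbb{Q}_\alpha$, but one first needs to check that this measure can be lifted through the earlier stages of the iteration so that $\mathbb{Q}_\alpha$ is available in the intermediate extension. Since no element of $A$ is a limit point of $A$, the portion of the iteration below $\alpha$ has size strictly less than $\alpha$; combined with a standard lifting argument (extending an elementary embedding witnessing measurability of $\alpha$ along a generic for the small iteration below) this produces a normal measure on $\alpha$ in $V[G][H^{<\alpha}]$.

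For part $(a)$, I plan to factor $\mathbb{Q}$ at each $\alpha\in A$ as $\mathbb{Q}^{<\alpha}\ast\dot{\mathbb{Q}}_\alpha\ast\dot{\mathbb{Q}}^{>\alpha}$. The first factor has size less than $\alpha$ and therefore preserves all cardinals at or above $\alpha$. The middle factor, being Prikry forcing with respect to a normal measure, satisfies the Prikry property and adds no bounded subsets of $\alpha$, so it preserves all cardinals as well. The tail $\mathbb{Q}^{>\alpha}$ acts only at $A$-elements above $\alpha$, each of which is inaccessible and in particular strictly greater than $\alpha^{+\omega_1+1}$, so it affects no cardinals in $[\alpha,\alpha^{+\omega_1+1}]$ and does not touch the powerset of $\alpha$. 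Iterating this factoring through the class yields preservation of all cardinals in $V[G][H]$.

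For part $(b)$, the Prikry generic at stage $\alpha$ produces a cofinal $\omega$-sequence in $\alpha$, so $\cf(\alpha)=\omega$. To see that $2^\alpha=\alpha^{+\omega_1+1}$ is preserved from $V[G]$ (where it holds by Theorem 2.3$(b)$), one uses that $\mathbb{Q}^{<\alpha}$ is too small to alter it, that $\mathbb{Q}_\alpha$ adds no new subsets of $\alpha^+$ by the Prikry property, and that $\mathbb{Q}^{>\alpha}$ acts strictly above $\mathcal{P}(\alpha)$. The strong-limit property in $V[G][H]$ then follows because for every cardinal $\beta<\alpha$ one has $2^\beta\leq\beta^{+\omega_1+1}<\alpha$, with the first inequality either inherited from $V[G]$ (when $\beta$ lies between two consecutive elements of $A$) or verified by the same analysis applied recursively at a smaller Prikry point $\beta\in A$.

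The main obstacle will be a careful propagation of the Prikry property along with verification that the class-length iteration structure is preserved across all stages: one must simultaneously maintain normal measures at every $\alpha\in A$, arrange the bookkeeping so that chain-condition and closure counts pin down the correct values of $2^\alpha$, and rule out cardinal collapses at limit stages of the iteration. The isolation hypothesis on $A$ is what makes this feasible, since it decouples the forcing into a small-below and a large-above component at each $\alpha\in A$ and prevents interference between consecutive Prikry points.
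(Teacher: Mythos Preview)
The paper does not actually prove this theorem: it is quoted verbatim from \cite{friedman-golshani} (where $\mathbb{Q}$ is defined in \S3.1), and no argument is given here beyond the citation. So there is no ``paper's own proof'' to compare against; you are supplying what the author deliberately outsourced.

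Your outline is the right shape---factor the class iteration at each $\alpha\in A$ into a small piece below, a Prikry-type piece at $\alpha$, and a far-away tail, then use the discreteness of $A$ to keep these pieces from interfering---and this is indeed how the argument in \cite{friedman-golshani} proceeds. Two points need tightening, though. First, your claim that ``$\mathbb{Q}_\alpha$ adds no new subsets of $\alpha^+$ by the Prikry property'' is false as stated: Prikry forcing certainly adds a new subset of $\alpha$ (the Prikry sequence). What the Prikry property gives you is that no new \emph{bounded} subsets of $\alpha$ are added; the preservation of $2^\alpha=\alpha^{+\omega_1+1}$ comes instead from a chain-condition and counting-of-nice-names argument, which in turn requires you to know enough about cardinal arithmetic above $\alpha$ in $V[G]$. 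Second, the tail $\mathbb{Q}^{>\alpha}$ must be shown not to add bounded subsets of the next element of $A$, which is where the Prikry property for the full iteration (not just the individual factors) is needed; this is the genuinely delicate step in \cite{friedman-golshani}, and your sketch acknowledges but does not address it.
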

The following lemma completes the proof of the theorem:
\begin{lemma}
In $V[G][H], C \supseteq \{\alpha^{+\omega_1}: \alpha\in A\}$.
\end{lemma}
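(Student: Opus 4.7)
The plan is to mirror the calculation of Lemma~2.2, substituting the preservation theorems of Step~1 and Step~2 for Theorem~2.1. I would fix $\alpha \in A$ and work throughout in $V[G][H]$. By Theorem~2.4(b), $\alpha$ is a strong limit of cofinality $\omega$ with $2^\alpha = \alpha^{+\omega_1+1}$; by Theorems~2.3(a) and~2.4(a), cardinals of $V$ are preserved at every stage, so $\alpha^{+\omega_1}$ and $\alpha^{+\omega_1+1}$ remain distinct cardinals, and in particular $\omega_1$ itself is preserved.

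To place $\alpha^{+\omega_1}$ into $C$, three things must be verified. First, $\alpha^{+\omega_1}$ is a cardinal, which is immediate from cardinal preservation. Second, $\cf(\alpha^{+\omega_1}) = \omega_1$: the sequence $\langle \alpha^{+\beta} : \beta < \omega_1 \rangle$ is strictly increasing (each term is a distinct cardinal of $V[G][H]$) and cofinal in $\alpha^{+\omega_1}$, so $\cf(\alpha^{+\omega_1}) = \cf(\omega_1) = \omega_1$. Third, $(\alpha^{+\omega_1})^\omega > \alpha^{+\omega_1}$: one chains
$$(\alpha^{+\omega_1})^\omega \ge \alpha^\omega = 2^\alpha = \alpha^{+\omega_1+1} > \alpha^{+\omega_1},$$
using the Hausdorff/K\"onig identity $\alpha^\omega = 2^\alpha$, which is valid precisely because $\alpha$ is a strong limit of cofinality~$\omega$.

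I anticipate essentially no obstacle, since the substantive work---arranging, simultaneously for each $\alpha \in A$, that $\alpha$ becomes a singular strong limit of cofinality $\omega$ with $2^\alpha = \alpha^{+\omega_1+1}$---has already been packaged into Theorems~2.3 and~2.4 as quoted from \cite{friedman-honzik} and \cite{friedman-golshani}. The only delicate point worth flagging is that $\mathbb{Q}$, although it shifts $\cf(\alpha)$ from measurable to $\omega$, must not also shift $\cf(\alpha^{+\omega_1})$ or collapse $\alpha^{+\omega_1+1}$; both are ruled out by the cardinal-preservation clause of Theorem~2.4(a), together with preservation of $\omega_1$. Thus each $\alpha^{+\omega_1}$ lies in $C$, completing the proof.
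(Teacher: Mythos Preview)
Your proof is correct and follows essentially the same route as the paper: fix $\alpha\in A$, note $\cf(\alpha^{+\omega_1})=\omega_1$, and run the chain $(\alpha^{+\omega_1})^\omega \ge \alpha^\omega = 2^\alpha = \alpha^{+\omega_1+1} > \alpha^{+\omega_1}$. The only difference is that you spell out the justifications (cardinal preservation, regularity of $\omega_1$, and why $\alpha^\omega=2^\alpha$ for a strong limit of countable cofinality) that the paper leaves as ``clearly.''
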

\begin{proof}
Work in $V[G][H]$ and let $\alpha\in A.$   Clearly $cf(\alpha^{+\omega_1})=\omega_1.$ We also have
$(\alpha^{+\omega_1})^\omega \geq \alpha^\omega=2^\alpha=\alpha^{+\omega_1+1} > \alpha^{+\omega_1}.$
\end{proof}
\subsection{Third proof}
In  this subsection we give our third proof of the main Theorem 1.1., assuming the existence of a proper class of $\kappa^{+\omega_1+1}-$strong cardinals $\kappa$. Again assume $GCH$ holds and let $A$ be a proper class of $\kappa^{+\omega_1+1}-$strong cardinals $\kappa$, such  that no element of $A$ is a limit point of $A$.

 For each $\kappa\in A,$ fix a $(\kappa, \kappa^{+\omega_1+1})-$extender $E(\k)$ and let $(\mathbb{P}_{E(\k)}, \leq_{\mathbb{P}_{E(\k)}}, \leq^*_{\mathbb{P}_{E(\k)}})$ (where $\leq^*_{\mathbb{P}_{E(\k)}}$ is the Prikry extension relation) be the corresponding extender based Prikry forcing for changing the cofinality of $\kappa$ into $\omega,$ and making $2^\k=\kappa^{+\omega_1+1}$ \cite{gitik-magidor}.

 Let $\mathbb{P}$ be the following version of iterated extender based Prikry forcing. Conditions in $\mathbb{P}$ are of the form $p=(X^p, F^p),$ where

 \begin{enumerate}
\item $X^p$ is a subset of $A$,
\item $F^p$ is a function on $X^p$,
\item For all $\k\in X^p, F^p(\k)$ is a condition in $\mathbb{P}_{E(\k)}$.
\end{enumerate}
Given $p, q\in \mathbb{P},$ we define $p\leq q$ ($p$ is stronger than $q$), if
 \begin{enumerate}
\item $X^p \supseteq X^q,$
\item For all $\k\in X^q,$ $F^p(\k) \leq_{\mathbb{P}_{E(\k)}} F^q(\k)$.
\end{enumerate}
We also define the Prikry relation by $p\leq^* q$ iff
\begin{enumerate}
\item $p\leq q,$
\item For all $\k\in X^q,$ $F^p(\k) \leq^*_{\mathbb{P}_{E(\k)}} F^q(\k)$.
\end{enumerate}
Let $G$ be $\mathbb{P}-$generic over $V$.
Then using the methods of \cite{friedman-golshani} and \cite{gitik-magidor} we can prove the following:
\begin{theorem}
$(a)$ $\mathbb{P}$ is a tame class forcing notion; in particular $V[G]\models ZFC,$

$(b)$ $(\mathbb{P}, \leq, \leq^*)$ satisfies the Prikry property,

$(c)$ $V$ and $V[G]$ have the same cardinals,

$(d)$ For each $\alpha\in A, V[G]\models ``\alpha$ is a strong limit cardinal, $cf(\alpha)=\omega$ and $2^{\alpha}=\alpha^{+\omega_1+1}$''.
\end{theorem}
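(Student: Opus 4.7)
The plan is to verify each of (a)--(d) by reducing to properties of the individual factors $\mathbb{P}_{E(\kappa)}$ together with the Easton-like structure of the support. The crucial observation is the assumption that no $\kappa\in A$ is a limit point of $A$: for each $\kappa\in A$, $A\cap\kappa$ is bounded below $\kappa$, so the natural factorization $\mathbb{P}\cong \mathbb{P}_{<\kappa}\times\mathbb{P}_{\geq\kappa}$ presents $\mathbb{P}_{<\kappa}$ as a set forcing of size $<\kappa$, while $\mathbb{P}_{\geq\kappa}$ is $\leq^*$-closed below $\kappa$ (since the direct extension order $\leq^*_{\mathbb{P}_{E(\lambda)}}$ is $\lambda$-closed for each $\lambda\geq\kappa$ in $A$, and these can be lined up coordinate-wise along any decreasing sequence of length $<\kappa$).

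For (b), the Prikry property, I would mimic the proof in \cite{gitik-magidor} for a single extender-based Prikry forcing, adapted to the class product as in \cite{friedman-golshani}. Given a formula $\phi$ and $p=(X^p,F^p)$, pick any $\kappa\in A$ above the parameters and factor $\mathbb{P}=\mathbb{P}_{<\kappa}\times\mathbb{P}_{\geq\kappa}$; the upper factor has enough $\leq^*$-closure that a diagonal $\leq^*$-strengthening over its coordinates (using the Prikry property of each $\mathbb{P}_{E(\lambda)}$ at $\lambda\in X^p\cap[\kappa,\infty)$) produces an $\leq^*$-extension which, together with a generic for the small lower factor, decides $\phi$. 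The usual ``strong Prikry'' argument of Gitik--Magidor applied coordinate-wise is then combined with a pigeonhole over the set-sized lower factor.

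For (a) and (c), the tameness of $\mathbb{P}$ as a class forcing (and hence the preservation of ZFC) follows from the factorization plus (b): since $\mathbb{P}_{<\kappa}$ has size $<\kappa$ and $\mathbb{P}_{\geq\kappa}$ adds no bounded subsets of $\kappa$ (by the Prikry property and the $\leq^*$-closure), each instance of Replacement is witnessed below some $\kappa\in A$, and the usual pretameness criteria of Friedman are met. Cardinal preservation follows by the same factoring argument: successor cardinals above each $\kappa\in A$ and below the next element of $A$ are preserved because $\mathbb{P}_{\geq\kappa}$ adds no bounded subsets of $\kappa$, while $\mathbb{P}_{<\kappa}$, being small, preserves cardinals above its size.

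For (d), fix $\alpha\in A$ and factor $\mathbb{P}$ as $\mathbb{P}_{<\alpha}\times\mathbb{P}_{E(\alpha)}\times\mathbb{P}_{>\alpha}$, where $\mathbb{P}_{>\alpha}$ collects the coordinates above $\alpha$. By the analysis above, $\mathbb{P}_{<\alpha}$ is small and $\mathbb{P}_{>\alpha}$ adds no bounded subsets of $\alpha$, so all the relevant combinatorics at $\alpha$ come from $\mathbb{P}_{E(\alpha)}$ alone; by \cite{gitik-magidor}, this factor turns $\alpha$ into a strong limit of cofinality $\omega$ with $2^\alpha=\alpha^{+\omega_1+1}$, and these properties survive the small and highly closed factors. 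The main obstacle I anticipate is (b): verifying the Prikry property uniformly across the class iteration, especially since the diagonal strengthening must be coordinated with the changing extender systems and shown to terminate within a set-sized portion of $\mathbb{P}$; once (b) is in hand, (a), (c), and (d) follow by fairly standard factoring arguments.
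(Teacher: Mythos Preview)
The paper does not actually prove this theorem: it merely states that the result follows ``using the methods of \cite{friedman-golshani} and \cite{gitik-magidor}''. Your outline is precisely an expansion of what those methods amount to---factoring the product at each $\kappa\in A$ into a small lower part and a $\leq^*$-closed upper part, deducing the Prikry property and tameness from this, and reading off (d) from the single-coordinate analysis of \cite{gitik-magidor}---so your approach is exactly what the paper intends.

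One small sharpening for (d): you say $\mathbb{P}_{>\alpha}$ adds no bounded subsets of $\alpha$, but to conclude $2^\alpha=\alpha^{+\omega_1+1}$ survives you need that $\mathbb{P}_{>\alpha}$ adds no subsets of $\alpha^{+\omega_1+1}$; this holds because the next element of $A$ above $\alpha$ is inaccessible and hence exceeds $\alpha^{+\omega_1+1}$, so the $\leq^*$-closure of $\mathbb{P}_{>\alpha}$ is at least $\alpha^{+\omega_1+2}$.
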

The rest of the argument is as in the second proof.
\section{necessary use of large cardinals}

In this section we show that some large cardinal assumptions are needed for the proof of  Theorem 1.1.
\begin{theorem}
Assume there is a model $V$ of $ZFC$ in which $C$ is a proper class. Then there is an inner model of $ZFC$ which contains a proper class of measurable cardinals.
\end{theorem}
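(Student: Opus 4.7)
My plan is to argue by contradiction via the covering lemma for the core model. Suppose, toward a contradiction, that no inner model of $V$ contains a proper class of measurable cardinals. Then the appropriate core model $K$ below ``a proper class of measurables'' exists (in the sense of Mitchell, Jensen, Steel, and Schindler), and Weak Covering holds over $K$: the measurables of $K$ form a set bounded by some ordinal $\mu$, and $cf^V((\lambda^+)^K) = cf^V(\lambda)$ for every singular $V$-cardinal $\lambda > \mu$. A standard computation then yields the Singular Cardinals Hypothesis in $V$ above $\mu$.

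With SCH in hand above $\mu$, I show that $C$ is bounded, which contradicts the hypothesis. Let $\alpha$ be any cardinal with $cf(\alpha) = \omega_1$ and $\alpha > \max(\mu, 2^{\aleph_0})$. Since $cf(\alpha) > \omega$, every $\omega$-sequence in $\alpha$ is bounded, so $\alpha^\omega = \sup_{\beta < \alpha} \beta^\omega$. By induction on cardinals $\beta$ with $2^{\aleph_0} \leq \beta < \alpha$, one checks that $\beta^\omega \leq \beta^+$: if $\beta$ is regular, or singular of uncountable cofinality, the induction hypothesis gives $\beta^\omega = \sup_{\gamma < \beta} \gamma^\omega \leq \beta$; if $cf(\beta) = \omega$, then SCH at $\beta$ (valid since $\beta > \mu$) yields $\beta^\omega = \beta^+$. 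Since $\alpha$ is a limit cardinal, $\beta^+ < \alpha$ for each such $\beta$, so $\sup_{\beta < \alpha} \beta^\omega \leq \alpha$, whence $\alpha^\omega = \alpha$ and $\alpha \notin C$. This contradicts $C$ being a proper class.

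The main obstacle will be invoking the correct core-model technology: one needs a $K$ whose existence follows from the anti-large-cardinal hypothesis ``no inner model with a proper class of measurables'' and which satisfies enough Weak Covering to deliver SCH above $\sup\{\kappa : \kappa$ is measurable in $K\}$. This is in the spirit of Mitchell's classical result linking the failure of SCH to measurables of high Mitchell order, suitably extended by Jensen, Steel, and Schindler. An alternative route would be to extract first from the proper class $C$ a proper class of singular cardinals at which SCH fails (by the same combinatorial computation run in reverse), and then appeal to the standard core-model fact that a proper class of SCH-failures forces a proper class of measurables in the core model.
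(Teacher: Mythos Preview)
Your approach is correct and is the same strategy as the paper's: use a core model with bounded measurables plus covering to force $\alpha^\omega=\alpha$ for all sufficiently large $\alpha$ of cofinality $\omega_1$. The paper's execution is more concrete and more direct. It first reduces to the case that there is no inner model with a strong cardinal, takes $\mathcal{K}$ to be the core model below a strong, and assumes the measurables of $\mathcal{K}$ are bounded by some $\lambda>2^{\omega_1}$. Rather than routing through SCH, it applies the full covering lemma directly: for $\alpha>2^\lambda$ with $cf(\alpha)=\omega_1$ and $\delta<\alpha$, every countable subset of $\delta$ lies in some $x\in\mathcal{K}\cap[\delta]^\lambda$, so $\delta^\omega\le|\mathcal{K}\cap[\delta]^\lambda|\cdot 2^\lambda\le\delta^+\cdot 2^\lambda<\alpha$; since $[\alpha]^\omega=\bigcup_{\delta<\alpha}[\delta]^\omega$, this gives $\alpha^\omega=\alpha$.

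Your detour through SCH is equivalent in content---the paper's direct count \emph{is} the proof of SCH above $\lambda$---but two points need tightening. First, the formula you label Weak Covering, $cf^V((\lambda^+)^K)=cf^V(\lambda)$, is not the correct statement (for singular $\lambda$ with $(\lambda^+)^K=\lambda^+$ it would assert that a regular cardinal has cofinality $cf(\lambda)$). Second, weak covering alone does not yield SCH; you need the full covering lemma above $\mu$, which is indeed available since above the measurables of $K$ there are no indiscernible systems to subtract, or else you should invoke Mitchell's theorem that failure of SCH at a singular $\kappa$ forces $o^K(\kappa)\ge\kappa^{++}$. With either of those in place, your inductive computation that SCH above $\mu$ gives $\alpha^\omega=\alpha$ for $\alpha>\max(\mu,2^{\aleph_0})$ of cofinality $\omega_1$ is correct.
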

\begin{proof}
We may assume that  there is no inner model with a strong cardinal, as otherwise we are done. Let $\mathcal{K}$ denote the core model of $V$ below a strong cardinal. Assume on the contrary that the measurable cardinals of $\mathcal{K}$ are bounded, say by $\lambda > 2^{\omega_1}.$ Then for all $\alpha >2^\lambda$ with $cf(\alpha)=\omega_1,$ we have
\[ [\alpha]^{\omega}= \bigcup_{\delta<\alpha}[\delta]^\omega.  \]
On the other hand,  by the covering lemma,
\[ [\delta]^\omega \subseteq [\delta]^{\leq\lambda} \subseteq \bigcup_{x\in \mathcal{K}\cap [\delta]^\lambda}P(x), \]
and hence
\[ \delta^\omega \leq \sum_{x\in \mathcal{K}\cap [\delta]^\lambda}|P(x)|\leq |\mathcal{K}\cap [\delta]^\lambda|.2^\lambda\leq\delta^+.2^\lambda <\alpha,   \]
which implies
\[ \alpha^\omega=\alpha \]
Thus $C \subseteq (2^{\lambda})^+$ is bounded, and we get a contradiction.
\end{proof}
In fact we can prove more:
\begin{theorem}
Assume there is a model $V$ of $ZFC$ in which $C$ is a proper class. Then $\{\delta: \delta$ is a cardinal, $cf(\delta)=\omega$ and $\delta^\omega \geq \delta^{+\omega_1+1} \}$ is a proper class.
\end{theorem}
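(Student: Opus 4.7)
The plan is to argue by contradiction. Suppose the set $D := \{\delta : \delta \text{ is a cardinal}, \cf(\delta) = \omega, \delta^\omega \geq \delta^{+\omega_1+1}\}$ is bounded by some cardinal $\mu$. I will extract from any sufficiently large $\alpha \in C$ a member of $D$ above $\mu$, contradicting the assumed bound and thereby forcing $D$ to be a proper class.

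Fix $\alpha \in C$ with $\alpha > \mu$, which exists because $C$ is a proper class. Writing $\alpha = \aleph_\beta$ with $\cf(\beta) = \omega_1$, choose a strictly increasing cofinal sequence $\langle \beta_i : i < \omega_1\rangle$ in $\beta$ so that each $\beta_i$ has cofinality $\omega$ (for instance $\beta_i = \gamma_i + \omega$ for some cofinal $\gamma_i \to \beta$), $\beta_{i+1} > \beta_i + \omega_1 + 1$, and $\aleph_{\beta_0} > \mu$. Setting $\alpha_i := \aleph_{\beta_i}$, this gives $\cf(\alpha_i) = \omega$, $\alpha_{i+1} > \alpha_i^{+\omega_1+1}$, $\alpha_0 > \mu$, and $\sup_{i<\omega_1}\alpha_i = \alpha$.

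The key cardinal-arithmetic step is the main obstacle, and it exploits $\cf(\alpha) = \omega_1 > \omega$: every countable subset of $\alpha$ is bounded by some $\alpha_i$, so $[\alpha]^\omega = \bigcup_{i<\omega_1}[\alpha_i]^\omega$ and therefore
$$\alpha^\omega \;\leq\; \sum_{i<\omega_1}\alpha_i^\omega \;=\; \omega_1 \cdot \sup_{i<\omega_1}\alpha_i^\omega.$$
Since $\alpha^\omega > \alpha \geq \omega_1$, the right-hand side exceeds $\alpha$, which forces $\sup_i \alpha_i^\omega > \alpha$, so there exists $i_0$ with $\alpha_{i_0}^\omega > \alpha$. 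By construction $\alpha > \alpha_{i_0+1} > \alpha_{i_0}^{+\omega_1+1}$, whence $\alpha_{i_0}^\omega > \alpha_{i_0}^{+\omega_1+1}$; together with $\cf(\alpha_{i_0}) = \omega$ this puts $\alpha_{i_0}$ into $D$. But $\alpha_{i_0} > \mu$, contradicting the assumed bound on $D$ and completing the proof. The only mildly delicate ingredient is choosing the cofinal sequence so that each $\alpha_i$ has cofinality $\omega$ while the successive gaps exceed $\alpha_i^{+\omega_1+1}$; both are trivially arrangeable since $\cf(\alpha) = \omega_1$ leaves ample room to thin out and to shift each $\beta_i$ by $\omega$.
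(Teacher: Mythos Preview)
Your proof is correct and follows essentially the same idea as the paper's: use $\cf(\alpha)=\omega_1>\omega$ to find some $\gamma<\alpha$ with $\gamma^\omega\geq\alpha^+$, and then obtain a cardinal $\delta<\alpha$ of cofinality $\omega$ with $\delta^\omega\geq\alpha^+\geq\delta^{+\omega_1+1}$. The paper does this more directly---without the contradiction wrapper and without pre-building the cofinal sequence---by first choosing any $\gamma<\alpha$ with $\gamma^\omega\geq\alpha^+$ and then taking any $\delta\in(\gamma,\alpha)$ of cofinality $\omega$ (noting that $\delta<\alpha$ and $\cf(\alpha)=\omega_1$ automatically give $\delta^{+\omega_1}\leq\alpha$).
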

\begin{proof}
Given $2^\omega <\alpha\in C,$ we have $cf(\alpha)=\omega_1$ and $\alpha^\omega \geq \alpha^+,$ hence there is $\gamma <\alpha$ such that $\gamma^\omega \geq \alpha^+.$ Let $\delta$ be a singular cardinal of cofinality $\omega$ in the interval $(\gamma, \alpha).$ Then $\delta^\omega \geq \alpha^+ \geq \delta^{+\omega_1+1}.$
\end{proof}
It follows from the above theorem and the results of \cite{gitik-mitchell} that the large cardinal assumption made in our second and third proofs is almost optimal.
\section{a generalization}
In general, for an infinite cardinal $\lambda,$ set $C_\lambda=\{\alpha: \alpha$ is a cardinal, $cf(\alpha)=\lambda^+$ and $\alpha^{\lambda} >\alpha^{<\lambda}=\alpha \}.$ Then by a simple modification of the above proofs we have the following:
\begin{theorem}
Suppose $GCH$ holds, $\kappa$ is a strong cardinal and $\lambda$ is an infinite cardinal less than $\kappa.$ Then there is a cardinal preserving generic extension $V[G]$ of the universe in which $\kappa$ remains inaccessible, no new subsets of $\lambda^+$ are added (in particular it remains regular in the extension), and $C_\lambda \cap \kappa$ is unbounded in $\kappa$.
\end{theorem}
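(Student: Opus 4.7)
The plan is to adapt the first proof of Section 2.1 by systematically replacing $\omega$ with $\lambda$ and $\omega_1$ with $\lambda^+$. Since $\kappa$ is strong, I fix an elementary embedding $j:V\to M$ with $\crit(j)=\kappa$ and $M\supseteq V_{\kappa^{+\lambda^{+}+2}}$, and use $j$ to construct, as in \cite{merimovich}, an extender sequence system $\bar{E}$ of length $\kappa^{+}$ and size $\kappa^{+\lambda^{+}+1}$ tailored to cofinality $\lambda$. Let $\MPB_{\bar{E}}$ be the corresponding extender based Radin forcing, and let $G$ be $\MPB_{\bar{E}}$-generic over $V$. Carried out with this parameter change, the analysis of \cite{merimovich} yields the analog of Theorem 2.1: $V$ and $V[G]$ have the same cardinals, $\kappa$ remains inaccessible in $V[G]$, and there is a club $\bar{C}\subseteq\kappa$ in $V[G]$ such that every limit point $\gamma$ of $\bar{C}$ is a strong limit cardinal with $2^{\gamma}=\gamma^{+\lambda^{+}+1}$, while $\GCH$ is preserved at the intermediate successor cardinals $\gamma^{+\delta}$ for $0<\delta<\lambda^{+}$.

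Next I verify that no new subsets of $\lambda^{+}$ are added. The direct extension order $\leq^{*}$ of $\MPB_{\bar{E}}$ is $\kappa$-closed, hence in particular $\lambda^{++}$-closed, and $(\MPB_{\bar{E}},\leq,\leq^{*})$ enjoys the Prikry property by \cite{merimovich}. Given a name $\dot{x}$ for a subset of $\lambda^{+}$ and a condition $p$, I build inductively a $\leq^{*}$-decreasing sequence $\langle p_{\xi}:\xi\le\lambda^{+}\rangle$ with $p_{0}=p$: at successors, use the Prikry property to pass from $p_{\xi}$ to some $p_{\xi+1}\leq^{*}p_{\xi}$ that decides $\xi\in\dot{x}$; at limits, take a $\leq^{*}$-lower bound by closure. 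Then $p_{\lambda^{+}}$ forces $\dot{x}$ to equal a ground-model subset of $\lambda^{+}$. Consequently $P(\lambda^{+})^{V[G]}=P(\lambda^{+})^{V}$ and $\lambda^{+}$ remains regular in $V[G]$.

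The construction is arranged so that unboundedly many limit points $\gamma$ of $\bar{C}$ satisfy $\cf(\gamma)=\lambda$, by the standard cofinality analysis of Radin-like generic clubs when the extender sequence system accommodates that cofinality. Fix such a $\gamma$ and set $\alpha=\gamma^{+\lambda^{+}}$. Then $\cf(\alpha)=\lambda^{+}$; since $\gamma$ is a strong limit of cofinality $\lambda$, one has $\gamma^{\lambda}=2^{\gamma}=\gamma^{+\lambda^{+}+1}$, so $\alpha^{\lambda}\ge\gamma^{\lambda}>\alpha$. For $\mu<\lambda<\cf(\alpha)$ one has $\alpha^{\mu}=\alpha\cdot\sup_{\beta<\alpha}\beta^{\mu}$, and the estimates $\gamma^{\mu}=\gamma$ (valid since $\mu<\cf(\gamma)$ and $\gamma$ is a strong limit) together with $\GCH$ at the cardinals $\gamma^{+\delta}$ for $0<\delta<\lambda^{+}$ give $\beta^{\mu}\le\alpha$ for all $\beta<\alpha$, whence $\alpha^{<\lambda}=\alpha$. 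Therefore $\alpha\in C_{\lambda}\cap\kappa$, and varying $\gamma$ these $\alpha$ are unbounded in $\kappa$.

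The main obstacle is the careful adaptation of Merimovich's extender sequence system with the parameter $\omega_{1}$ replaced by $\lambda^{+}$, together with the cofinality analysis showing that the generic Radin club has unboundedly many limit points of cofinality exactly $\lambda$. Granted these technical points, the remainder of the argument transcribes Section 2.1 verbatim.
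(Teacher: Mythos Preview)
Your proposal is correct and takes exactly the approach the paper indicates: the paper merely asserts that the theorem follows ``by a simple modification of the above proofs,'' and you carry out precisely this modification of the first proof in Section~2.1, replacing $\omega_1$ by $\lambda^+$ in the size of the extender sequence system, adding the Prikry-property argument for no new subsets of $\lambda^+$, and supplying the extra verification that $\alpha^{<\lambda}=\alpha$ (which is vacuous when $\lambda=\omega$). One small simplification: the existence of unboundedly many limit points of $\bar C$ of cofinality $\lambda$ requires no special ``cofinality analysis of Radin-like generic clubs''---since $\bar C$ is club in the regular cardinal $\kappa$ and $\lambda<\kappa$ remains regular in $V[G]$, the $\xi$-th element of $\bar C$ for any limit $\xi<\kappa$ with $\cf^{V[G]}(\xi)=\lambda$ is automatically such a point.
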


\begin{theorem}
Suppose $GCH$ holds, $\lambda$ is an infinite cardinal, and there exists a proper class of $\kappa^{+\lambda^++1}-$strong cardinals $\kappa.$ Then there is a generic extension $V[G]$ of $V$ in which $C_\lambda$ is a proper class.
\end{theorem}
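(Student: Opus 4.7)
The plan is to mimic the second proof of Subsection~2.2 with $\omega$ replaced by $\lambda$ and $\omega_1$ replaced by $\lambda^+$ throughout. Assume $GCH$ and let $A$ be a proper class of $\kappa^{+\lambda^++1}$-strong cardinals $\kappa$, no element of which is a limit point of $A$.

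First I would perform a reverse Easton iteration $\mathbb{P}$ forcing, at each inaccessible $\alpha$, with the natural $\lambda$-closed generalization of $Sacks(\alpha,\alpha^{+\lambda^++1})$. The straightforward generalization of \cite{friedman-honzik} yields that $\mathbb{P}$ preserves cardinals and cofinalities, forces $2^\alpha=\alpha^{+\lambda^++1}$ for every inaccessible $\alpha$, and leaves every $\kappa\in A$ measurable. Next, inside $V[G]$, I would define a class iteration $\mathbb{Q}$ of Magidor cofinality-change forcings, one at each $\kappa\in A$, each of which changes $cf(\kappa)$ to $\lambda$ without collapsing cardinals and while preserving $2^\kappa=\kappa^{+\lambda^++1}$. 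Equivalently, as in Subsection~2.3, one can fold both steps into a single iteration of extender-based Magidor forcing.

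To finish, I would prove the analogue of the key lemma: in the final model $V[G][H]$, $C_\lambda\supseteq\{\kappa^{+\lambda^+}:\kappa\in A\}$. Fix $\kappa\in A$ and put $\alpha=\kappa^{+\lambda^+}$. Then $cf(\alpha)=\lambda^+$ automatically. Because $\kappa$ is a strong limit with $cf(\kappa)=\lambda$, K\"onig gives $\kappa^\lambda=2^\kappa=\kappa^{+\lambda^++1}$, so $\alpha^\lambda\geq\kappa^\lambda>\alpha$. For $\alpha^{<\lambda}=\alpha$: fix $\mu<\lambda$; since $cf(\alpha)=\lambda^+>\mu$, we have $\alpha^\mu=\sup_{\beta<\alpha}\beta^\mu$. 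An induction on $\xi<\lambda^+$ using Hausdorff's formula, the fact that $\kappa^\mu=\kappa$ (as $\kappa$ is a strong limit with $cf(\kappa)=\lambda>\mu$), and $GCH$ at non-inaccessible cardinals above $\kappa^{+\lambda^++1}$ (preserved by the forcing) shows $(\kappa^{+\xi})^\mu\leq\kappa^{+\xi+1}<\alpha$. Hence $\alpha^\mu=\alpha$ for each $\mu<\lambda$, so $\alpha^{<\lambda}=\alpha$ and $\alpha\in C_\lambda$.

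The main technical obstacle is arranging the iterated Magidor forcing in Step~2 for general (possibly uncountable) $\lambda$: the class-sized iteration must satisfy a Prikry-like property and must preserve both cardinals and the values $2^\kappa=\kappa^{+\lambda^++1}$ at each $\kappa\in A$. For $\lambda=\omega$ this reduces to the Prikry-based analysis of \cite{friedman-golshani}, but for $\lambda>\omega$ one must replace Prikry by (possibly extender-based) Magidor forcing and verify the Prikry property of the resulting class iteration by adapting the techniques of \cite{gitik-magidor}. The cardinal-arithmetic verification of the lemma and the bookkeeping of the large cardinal hypothesis then carry over from the second and third proofs without essential change.
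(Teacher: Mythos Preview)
Your proposal follows exactly the route the paper intends: the paper gives no separate proof of this theorem, merely stating that it follows ``by a simple modification of the above proofs,'' and your plan is precisely that modification of the second (and third) proof with $\omega$ replaced by $\lambda$ and $\omega_1$ by $\lambda^+$. Two small remarks: first, your justification of $\alpha^{<\lambda}=\alpha$ via Hausdorff is cleaner if you note that both the Sacks step at $\kappa$ and the Magidor step are sufficiently closed so as not to add new $\mu$-sequences for $\mu<\lambda$, whence $(\kappa^{+\xi})^\mu$ is computed as in the $GCH$ model prior to the stage-$\kappa$ forcing (your appeal to ``$GCH$ above $\kappa^{+\lambda^++1}$'' is not what is actually used); second, plain Magidor forcing needs $o(\kappa)\geq\lambda$ rather than mere measurability, so one must either check that this survives the Sacks iteration from the $\kappa^{+\lambda^++1}$-strength hypothesis, or---as you already note---pass directly to the extender-based version as in the third proof.
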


School of Mathematics, Institute for Research in Fundamental Sciences (IPM), P.O. Box:
19395-5746, Tehran-Iran.

E-mail address: golshani.m@gmail.com


\begin{thebibliography}{99}

\bibitem{friedman-golshani}  Sy-D. Friedman and M. Golshani, Killing the GCH everywhere with a single real, J. Symbolic Logic 78 (2013), no 3, 803--823.

\bibitem{friedman-honzik}  Sy-D. Friedman and R. Honzik, Easton's theorem and large cardinals. Ann. Pure Appl. Logic 154 (2008), no. 3, 191-–208.

\bibitem{gitik-magidor}  M. Gitik and M. Magidor, The singular cardinal hypothesis revisited, Set Theory
of the Continuum (Haim Judah, Winfried Just, and Hugh Woodin, editors), Springer-Verlag, 1992,
pp. 243–-278.

\bibitem{gitik-mitchell} M. Gitik and W. J. Mitchell, Indiscernible sequences for extenders, and the singular
cardinal hypothesis, Annals of Pure and Applied Logic, vol. 82 (1996), no. 3, pp. 273–-316.

\bibitem{Gorelic}  I. Gorelic,
External cofinalities and the antichain condition in partial orders,
Ann. Pure Appl. Logic 140 (2006), no. 1-3, 104–-109.

\bibitem{merimovich}  C. Merimovich, Extender-based Radin forcing. Trans. Amer. Math. Soc. 355 (2003), no. 5, 1729-–1772.

\bibitem{milner-sauer}  E. C. Milner and N. Sauer, Remarks on the cofinality of a partially ordered set, and a
generalization of konig's lemma, Discrete Math. 35 (1981), 165--171.

\bibitem{pouzet}  M. Pouzet, Parties cofinales des ordres partiels ne contenant pas d'antichaines infinies, 1980, preprint.

\end{thebibliography}
\end{document}